\newcommand{\numberset}{\mathbb}
\newcommand{\N}{\numberset{N}} 
\newcommand{\R}{\numberset{R}} 
\newcommand{\Sf}{\numberset{S}} 
\newtheorem{theorem}{Theorem}
\numberwithin{equation}{section}
\begin{document}

\author[D.\,Buoso]{Davide Buoso}
\author[R.\,Molinarolo]{Riccardo Molinarolo}

\address[D.\,Buoso]{Dipartimento per lo Sviluppo Sostenibile e la Transizione Ecologica, Universit\`a degli Studi del Piemonte Orientale ``Amedeo Avogadro'', Piazza Sant'Eusebio 5, 13100, Vercelli, Italy}
\email{davide.buoso@uniupo.it}
\address[R.\,Molinarolo]{Dipartimento per lo Sviluppo Sostenibile e la Transizione Ecologica, Universit\`a degli Studi del Piemonte Orientale ``Amedeo Avogadro'', Piazza Sant'Eusebio 5, 13100, Vercelli, Italy}
\email{riccardo.molinarolo@uniupo.it}

\title[On the eigenvalues of the biharmonic operator on annuli]{On the eigenvalues of the biharmonic operator on annuli}

\begin{abstract}
We show that the fundamental tone of the bilaplacian with Dirichlet or Navier boundary conditions on radially symmetric domains is always simple in dimension $N\ge3$. In dimension $N\ge2$ we show that it is simple if the inner radius is big enough.

\end{abstract}

\maketitle

\noindent
{\bf Keywords:} Bilaplacian, eigenvalues, eigenfunctions, annulus, punctured disk.

\bigskip

\noindent   
{{\bf 2020 Mathematics Subject Classification:}}
Primary 35P15. Secondary 35C05, 35P05, 35J30, 74K20.



\section{Introduction}
One of the most used properties of the first eigenvalue of the Dirichlet Laplacian
\begin{equation}\label{laplacian}
\begin{cases}
    \Delta u = \lambda u, & \text{in }\Omega,
    \\
    u=0, & \text{on }\partial\Omega,
\end{cases}
\end{equation}
on a bounded domain $\Omega\subseteq\mathbb R^N$ is that the first eigenvalue is simple and every associated eigenfunction does not change sign in $\Omega$. This can be traced all the way to the validity of the maximum principle, which in turn allows for the Krein-Rutman argument to work (see e.g., \cite[Theorem 3.3]{ggs}). Higher order eigenvalue problems, on the other hand, present a different situation. For instance, in the case of the
Dirichlet bilaplacian
\begin{equation}\label{eq:strongdirev}
\begin{cases}
    \Delta^2 u = \lambda u, & \text{in }\Omega,
    \\
    u=\partial_\nu u =0, & \text{on }\partial\Omega,
\end{cases}
\end{equation}
where $\nu$ denotes the unit outer normal, the first eigenfunction might be sign-changing. This is the case when $\Omega$ is a square \cite{cof} or an elongated ellipse \cite{gar}. More generally, if the domain has corners, then all the eigenfunctions oscillate in the corner (see \cite{cofduf80, kozkonmaz}). This is, to some extent, due to the lack of a maximum principle: in fact, for domains where the Green function is known to be positive, then a Krein-Rutman-type argument is valid implying simplicity of the first eigenvalue and positivity of the corresponding eigenfunction (see \cite[Theorem 3.7]{ggs}).

In particular, in \cite{cofdufsha} the authors show that the first eigenvalue of the Dirichlet bilaplacian \eqref{eq:strongdirev} in an annulus in $\mathbb R^2$ is double when the inner radius is sufficiently small, while it is simple when the inner radius is sufficiently big, with the threshold radius providing an example of a domain with a triple first eigenvalue. 
It must be noted that, more recently, a similar study was performed in \cite{buopar} for the eigenvalues of the bilaplacian buckling problem, where instead the first eigenvalue was shown to be always multiple. 

The aim of the present paper is to extend the study initiated in \cite{cofdufsha} to the case of higher dimension. We consider balls, punctured balls, and spherical shells in $\mathbb R^N$ for any $N\ge 3$ and, after computing somewhat explicitly the eigenvalues and the eigenfunctions, we show that, differently from the two-dimensional situation, the first eigenvalue is always simple. The proof is an extension of the argument already introduced in \cite{cofdufsha}, and in fact we are able to recover the corresponding result in dimension two.

The same techniques allow us also to treat the Navier eigenvalue problem
\begin{equation}\label{eq:strongnavev}
\begin{cases}
    \Delta^2 u = \lambda u, & \text{in } \Omega,
    \\
    (1+\sigma)\partial_\nu^2 u + \sigma \Delta u = 0, & \text{on } \partial\Omega,
    \\
    u = 0, & \text{on } \partial\Omega,
\end{cases}
\end{equation}
where $\sigma \in \left(-\frac{1}{N-1}, 1\right)$ is the so-called Poisson coefficient. For this problem we recover a result similar to the Dirichlet bilaplacian, i.e., the first eigenvalue can be multiple for annuli only in dimension two, while it is always simple for $N\ge 3$.

Our analysis starts with the computation of eigenfunctions and eigenvalues of problems \eqref{eq:strongdirev} and \eqref{eq:strongnavev} on balls, punctured balls, and spherical shells. The general idea is classical and begins with the observation that all eigenfunctions must be in the form of a radial function multiplied by a spherical harmonic function (cf.\ \cite{bf25, chas}). In the ball it is widely known that the radial part of the eigenfunction is a combination of Bessel functions and modified Bessel functions of the first kind, while for spherical shells all types of Bessel functions must be considered. The case of the punctured ball is instead trickier: while for high dimension it is no different from the ball, in low dimension the difference is significant, and this is linked with the behavior of the $H^2$-capacity (cf.\ \cite{bfgm}).

We then move to the identification of the fundamental tone for all these domains. The basic idea is that we can separate the eigenbranches by mean of the associated spherical harmonic degree, and in particular this translates in a simplified form for the Rayleigh quotient which allow restrict the search of the fundamental tone to a small number of eigenbranches. In particular, if the dimension is at least three the smallest eigenvalue is necessarily associated with a radial eigenfunction only, implying in turn its simplicity. 

In dimension two, instead, we see the possibility for the fundamental tone to be associated with an eigenfunction which is not radial. Indeed, this was already known for the Dirichlet bilaplacian \cite{cofdufsha}, and we show it here for the first time for the Navier case. Additionally, we prove that when the inner radius is big enough then the first eigenvalue is always simple. It must be noted though that the threshold we find in this way is not sharp, as can be seen from numerical evidence.


We remark that, although interesting on their own, problems \eqref{laplacian}, \eqref{eq:strongdirev}, and \eqref{eq:strongnavev} are also important in applications as they can be used to model vibrating membranes and plates. Specifically for the latter two, they are derived from the Kirchhoff-Love model of plates (see e.g., \cite{ray}) to study clamped and hinged plates, respectively. As a consequence, shedding light on questions like the ones we consider in this paper is more than mathematical speculation, since it can provide new ideas for the several applications that make use of these problems.


The paper is organized as follows. After some preliminary in Section \ref{sec2}, we compute the eigenfunctions and eigenvalues of radially symmetric domains in Section \ref{sec3}, and then dedicate Section \ref{sec4} to the study of the fundamental tones. In Appendix \ref{appA} we collect some useful facts about Bessel functions.


\section{Preliminaries}\label{sec2}

Let $\Omega \subseteq \R^N$ ($N\ge 2$) be a domain (i.e., an open connected set) with finite Lebesgue measure, and let $H^k(\Omega)$ be the Sobolev space of the (real) functions in $L^2(\Omega)$ whose derivatives up to order $k$ are also in $L^2(\Omega)$, equipped with the standard norm
\begin{equation*}
    \| u \|_{H^k(\Omega)} = \sum_{|\alpha|=0}^{k}\frac{|\alpha|!}{\alpha!} \| \partial^\alpha u \|_{L^2(\Omega)}.
\end{equation*}
Let also $H^k_0(\Omega)$ be the closure in $H^k(\Omega)$ of the set $C^{\infty}_{c}(\Omega)$ of smooth functions with compact support.

The value $\lambda \in \R$ is called an eigenvalue for the Dirichlet bilaplacian if there exists a function $u \in H^2_0(\Omega)$ satisfying
\begin{equation}\label{eq:weakdirev}
    \int_\Omega \Delta u \Delta \varphi \,dx = \lambda \int_{\Omega} u \varphi \, dx \quad \forall \varphi \in H^2_0(\Omega).
\end{equation}
Note that the bilinear form on the left hand side of \eqref{eq:weakdirev} can be associated with a densely defined selfadjoint operator with compact resolvent in $L^2(\Omega)$, implying that there exists a non-decreasing sequence of (strictly positive) eigenvalues of finite multiplicity satisfying equation \eqref{eq:weakdirev}
\begin{equation*}
    0 < \lambda_1^D(\Omega) \leq \lambda_2^D(\Omega) \leq \lambda_3^D(\Omega) \leq \dots
\end{equation*}
and, if $\partial\Omega$ is smooth enough, by standard regularity theory (see \cite[Section 2.5]{ggs}), the eigenfunctions all belong to $H^4(\Omega)$ and therefore satisfy the strong equation \eqref{eq:strongdirev}. In addition, the eigenvalues enjoy the following variational characterization
\begin{equation*}
    \lambda_k^D(\Omega) = \min_{\substack{V_k \subset H^2_0(\Omega)\\ \dim V_k=k}} \max_{u \in V_k\setminus \{0\}}  \frac{\displaystyle\int_{\Omega} (\Delta u)^2 \, dx}{\displaystyle\int_{\Omega} u^2 \, dx}.
\end{equation*}
We remark that, differently from what happens for the eigenvalues of second-order problems, the first eigenvalue $\lambda_1^D(\Omega)$ need not be simple, and the associated eigenfunction may change sign (see \cite{buosokennedy,ggs} and the references therein for additional details).

Now let us set $C_0(\overline{\Omega}) = \{ f \in C(\overline{\Omega}) \colon f_{|\partial\Omega} = 0 \}$, and set $H^2_N(\Omega)$ to be the completion in $H^2(\Omega)$ of $H^2(\Omega) \cap C_0(\overline{\Omega})$. Heuristically, $H^2_N(\Omega)$ is the subset of all the functions that vanish at the boundary. In general, whenever it is possible to define a trace operator $\mathrm{Tr} \colon H^2(\Omega) \to L^2(\partial\Omega)$, $\mathrm{Tr}\,(f)= f_{|\partial\Omega}$, then  
\begin{equation*}
    H^2_N(\Omega) = \{ f \in H^2(\Omega) \colon \mathrm{Tr}\,(f)=0 \}.
\end{equation*}
If the boundary is smooth enough (e.g., Lipschitz continuous), then $H^2_N(\Omega)$ coincides with $H^1_0(\Omega) \cap H^2(\Omega)$, while we will see in Section \ref{sec3} that the punctured ball is an example of a domain where they are different.

An interesting question is to try to set problem \eqref{eq:weakdirev} on $H^2_N(\Omega)$. Unfortunately, the quadratic form in \eqref{eq:weakdirev} is not coercive in general, hence we consider the following problem
\begin{equation}\label{eq:weaknavev}
    \int_\Omega ((1-\sigma) D^2 u : D^2 \varphi + \sigma \Delta u \Delta \varphi) \,dx = \lambda \int_{\Omega} u \varphi \, dx \quad \forall \varphi \in H^2_N(\Omega),
\end{equation}
which is called the Navier eigenvalue problem, where $\sigma$ is a constant called Poisson coefficient and $D^2 u : D^2 \varphi$ denotes the Frobenius product
\begin{equation*}
    D^2 u : D^2 \varphi = \sum_{|\alpha|=2} \frac{2}{\alpha!}\partial^\alpha u \, \partial^\alpha \varphi.
\end{equation*}
This bilinear form is coercive in $H^2_N(\Omega)$ for all $\sigma \in \left(-\frac{1}{N-1}, 1\right)$ due to the inequality
\begin{equation*}
    |D^2 u : D^2 u| = |D^2 u|^2 \geq \frac{1}{N} (\Delta u)^2 \quad \forall u \in H^2(\Omega). 
\end{equation*}
Now problem \eqref{eq:weaknavev} can be associated with a densely defined selfadjoint operator with compact resolvent in $L^2(\Omega)$, implying that there exists a non-decreasing sequence of (strictly positive) eigenvalues of finite multiplicity
satisfying equation \eqref{eq:weaknavev}
\begin{equation*}
    0 < \lambda_1^{N,\sigma}(\Omega) \leq \lambda_2^{N,\sigma}(\Omega) \leq \lambda_3^{N,\sigma}(\Omega) \leq \dots
\end{equation*}
and, if $\partial\Omega$ is smooth enough, by standard regularity theory (see \cite[Section 2.5]{ggs}), the eigenfunctions all belong to $H^4(\Omega)$ and therefore satisfy the strong equation \eqref{eq:strongnavev}. In addition, the eigenvalues enjoy the following variational characterization
\begin{equation*}
    \lambda_k^{N,\sigma}(\Omega) = \min_{\substack{V_k \subset H^2_N(\Omega)\\ \dim V_k=k}} \max_{u \in V_k\setminus \{0\}}  \frac{\displaystyle\int_{\Omega} (1-\sigma) |D^2 u|^2 + \sigma (\Delta u)^2 \, dx}{\displaystyle\int_{\Omega} u^2 \, dx}.
\end{equation*}
If $\Omega$ enjoys additional regularity properties (e.g., if $\partial\Omega$ is convex or of class $C^2$), then problem \eqref{eq:weaknavev} can be properly defined also for $\sigma=1$ and it is possible to show that the eigenvalues $\lambda_k^{N,1}$ are precisely the squares of those of the Dirichlet Laplacian, and the corresponding eigenspaces coincide (see \cite{ggs}).

We observe that, since
\begin{equation*}
    \int_\Omega ((1-\sigma) D^2 u : D^2 \varphi + \sigma \Delta u \Delta \varphi) \,dx = \int_{\Omega} \Delta u \Delta \varphi \, dx 
\end{equation*}
for any $u,\varphi \in H^2_0(\Omega)$, together with the inclusion $H^2_0(\Omega) \subseteq H^2_N(\Omega)$, then there is the natural inequality
\begin{equation}\label{ineq:lambdaN<lambdaD}
    \lambda_k^{N,\sigma}(\Omega) \leq \lambda_k^D(\Omega),
\end{equation}
for all $k \in \N$, for all $\displaystyle\sigma \in \left(-\frac{1}{N-1}, 1\right)$, and for any domain $\Omega$ of finite measure. If in addition $\Omega$ is
smooth, then inequality \eqref{ineq:lambdaN<lambdaD} holds also for $\sigma=1$.

Finally, we observe that, if $\Omega_1\subseteq\Omega_2$, then $\lambda_k^{D}(\Omega_1)\ge \lambda_k^{D}(\Omega_2)$ due to the natural embedding $H^2_0(\Omega_1)\subseteq H^2_0(\Omega_2)$. It is instead not clear whether such an inequality hold for the Navier case: in fact, we will see in Section \ref{sec4} some numerical evidence against it.


\section{Radially symmetric domains}\label{sec3}

In this section we compute somewhat explicitly the eigenvalues and the eigenspaces of problems \eqref{eq:weakdirev} and \eqref{eq:weaknavev} on balls, punctured balls, and spherical shells, centered at the origin. For any $a \in [0,1)$, we set
\begin{equation*}
    B = \{ x \in \R^N \, \colon |x| < 1 \}, \quad B_a = \{ x \in \R^N \, \colon a < |x| < 1 \}.
\end{equation*}
In particular, $B_0$ represent the punctured ball. Following \cite{bf25,chas} we deduce that the eigenfunctions of both \eqref{eq:weakdirev} and \eqref{eq:weaknavev} can be written as a radial function times a spherical harmonic, that is any such eigenfunction $u$ satisfies
\begin{equation}\label{eq:uradial}
    u(x) = f(r) S_\ell(\theta), 
\end{equation}
where $(r,\theta) \in \R_+ \times \Sf^{N-1}$ are the canonical spherical coordinates in $\R^N$, and $S_\ell$ satisfies
\begin{equation*}
    -\Delta_{\Sf^{N-1}} S_\ell(\theta) = \ell(\ell+N-2) S_\ell(\theta),
\end{equation*}
for $\ell \in \N_0$, where $\Delta_{\Sf^{n-1}}$ represents the Laplace-Beltrami operator on $\Sf^{n-1}$. As for the determination of
the radial part, we can rewrite the equation $\Delta^2 u = \lambda u$ as
\begin{equation*}
    (\Delta - t^2)(\Delta + t^2) u = 0,
\end{equation*}
where we have set $t = \lambda^{\frac{1}{4}}$. As the operators $(\Delta - t^2)$ and $(\Delta + t^2)$ commute, by separation of variables we obtain that
\begin{equation}\label{eq:fradial}
    f(r) = c_1 j_{\ell}(tr) + c_2 y_{\ell}(tr) + c_3 i_{\ell}(tr) + c_4 k_{\ell}(tr).
\end{equation}
Here $c_1,\dots,c_4$ are constants that have to be determined, and the functions $j_{\ell}, y_{\ell}, i_{\ell}, k_{\ell}$ are the ultraspherical Bessel functions defined as follows
\begin{equation*}
\begin{aligned}
&j_{\ell} (z) = z^{1-\frac{N}{2}} J_{\ell+\frac{N}{2}-1} (z) && y_{\ell} (z) = z^{1-\frac{N}{2}} Y_{\ell+\frac{N}{2}-1} (z) 
\\
&i_{\ell} (z) = z^{1-\frac{N}{2}} I_{\ell+\frac{N}{2}-1} (z) && k_{\ell} (z) = z^{1-\frac{N}{2}} K_{\ell+\frac{N}{2}-1} (z) 
\end{aligned}
\end{equation*}
We refer to \cite{olvernist} for the definitions and basic properties of Bessel functions (see also Appendix \ref{appA}).

In order to determine the coefficients $c_i$ and the eigenvalue $t^4$, we have to impose the boundary conditions. Starting with the case of the ball $B$, we first observe that the eigenfunctions are of class at least $C^\infty$ (see \cite[Section 2.5]{ggs}), and since no combination of $y_{\ell}$ and $k_{\ell}$ is smooth in the origin, we deduce that $c_2 = c_4 = 0$. On the other hand, the boundary conditions now read
\begin{equation*}
    f(r)_{|r=1} = 0, \quad f'(r)_{|r=1} = 0,
\end{equation*}
for the Dirichlet problem \eqref{eq:strongdirev}, while 
\begin{equation*}
    f(r)_{|r=1} = 0, \quad f''(r) + \sigma \frac{N-1}{r}f'(r)_{|r=1} = 0,
\end{equation*}
for the Navier problem \eqref{eq:strongnavev}. The imposition of the boundary conditions yields a $2\times 2$ linear system in
the unknowns $c_1,c_3$ that must have a nontrivial kernel in order to have nontrivial solutions, and in turn this provides an equation for $t$.
In particular, in the case of the Dirichlet problem \eqref{eq:strongdirev} we are led to the homogeneous system with associated matrix
\begin{equation*}
    M^{D}_1 (t) =
    \begin{pmatrix}
    j_{\ell}(t)
    &i_{\ell}(t)
    \\
    tj'_{\ell}(t)
    &ti'_{\ell}(t)
    \end{pmatrix},
\end{equation*}
and in order to have nontrivial solutions we must impose
\begin{equation*}
    0=\det M^{D}_1(t) = t \Big(  j_{\ell}(t) i'_{\ell}(t) - i_{\ell}(t) j'_{\ell}(t)\Big) = t \Big(  j_{\ell}(t) i_{\ell+1}(t) + j_{\ell+1}(t) i_{\ell}(t) \Big).
\end{equation*}
Here we used the differentiation properties of Bessel functions to simplify the determinant (see Appendix \ref{appA}).

In the case of the Navier problem \eqref{eq:strongnavev} we are instead led to the homogeneous system with associated
matrix
\begin{equation*}
    M^{N}_1(t) =
    \begin{pmatrix}
    j_{\ell}(t)
    &i_{\ell}(t)
    \\
    t^2j''_{\ell}(t) + \sigma (N-1) t  j'_{\ell}(t)
    &t^2i''_{\ell}(t) + \sigma (N-1) t  i'_{\ell}(t)
    \end{pmatrix},
\end{equation*}
and in order to have nontrivial solutions we must impose
\begin{equation}\label{eq:detM^N_1=0}
\begin{aligned}
    0=\det M^{N}_1(t) &= t^2\Big(j_{\ell}(t)i''_{\ell}(t)- j''_{\ell}(t)i_{\ell}(t)\Big) - t \sigma (N-1) \Big(j_{\ell}(t)i'_{\ell}(t)- j'_{\ell}(t)i_{\ell}(t)  \Big)
    \\
    &= 2t^2 j_{\ell}(t)i_{\ell}(t) - t(N-1)(1-\sigma) \Big( j_{\ell}(t) i_{k+1}(t) + j_{k+1}(t) i_{\ell}(t) \Big).
\end{aligned}
\end{equation}
Notice that, for $\sigma=1$, equation \eqref{eq:detM^N_1=0} gives precisely the (square of the) eigenvalues of the Dirichlet Laplacian as solutions.

Turning now to spherical shells $B_a$, for $a \in (0,1)$, the situation becomes a little more involved as we cannot say a priori that any of the coefficients $c_i$ vanish. The boundary conditions now read
\begin{equation*}
    f(r)_{|r=a,1} = 0, \quad f'(r)_{|r=a,1} = 0,
\end{equation*}
for the Dirichlet problem \eqref{eq:strongdirev}, while 
\begin{equation*}
    f(r)_{|r=a,1} = 0, \quad f''(r) + \sigma \frac{N-1}{r}f'(r)_{|r=a,1} = 0,
\end{equation*}
for the Navier problem \eqref{eq:strongnavev}.
The imposition of the boundary conditions yields a $4\times 4$ linear system in
the unknowns $c_1,\dots, c_4$ that must have a nontrivial kernel in order to have nontrivial solutions, and in turn this provides an equation for t. In particular, in the case of the Dirichlet problem \eqref{eq:strongdirev} we are led to the homogeneous system with associated matrix
\begin{equation*}
    M^{D}_{a}(t) =
    \begin{pmatrix}
    j_{\ell}(t)
    &y_{\ell}(t)
    &i_{\ell}(t)
    &k_{\ell}(t)
    \\
    tj'_{\ell}(t)
    &ty'_{\ell}(t)
    &ti'_{\ell}(t)
    &tk'_{\ell}(t)
    \\
    j_{\ell}(ta)
    &y_{\ell}(ta)
    &i_{\ell}(ta)
    &k_{\ell}(ta)
    \\
    tj'_{\ell}(ta)
    &ty'_{\ell}(ta)
    &ti'_{\ell}(ta)
    &tk'_{\ell}(ta)
    \end{pmatrix},
\end{equation*}
and in order to have nontrivial solutions we must impose
\begin{equation}\label{eq:detM^D_a=0}
    0=\det M^{D}_{a}(t) = t^2 \det 
    \begin{pmatrix}
    j_{\ell}(t)
    &y_{\ell}(t)
    &i_{\ell}(t)
    &k_{\ell}(t)
    \\
    -j_{\ell+1}(t)
    &-y_{\ell+1}(t)
    &i_{\ell+1}(t)
    &-k_{\ell+1}(t)
    \\
    j_{\ell}(ta)
    &y_{\ell}(ta)
    &i_{\ell}(ta)
    &k_{\ell}(ta)
    \\
    -j_{\ell+1}(ta)
    &-y_{\ell+1}(ta)
    &i_{\ell+1}(ta)
    &-k_{\ell+1}(ta)
    \end{pmatrix}.
\end{equation}
In the case of the Navier problem \eqref{eq:strongnavev} we are instead led to the homogeneous system with associated
matrix
\begin{equation*}
    M^{N}_{a}(t) =
    \begin{pmatrix}
    (M^{N}_{a})_{1,1}(t)
    &(M^{N}_{a})_{1,2}(t)
    \\
    (M^{N}_{a})_{2,1}(t)
    &(M^{N}_{a})_{2,2}(t)
    \end{pmatrix},
\end{equation*}
where 
\begin{equation*}
\begin{aligned}
    &(M^{N}_{a})_{1,1}(t) = 
    \begin{pmatrix}
    j_{\ell}(t)
    &y_{\ell}(t)
    \\
    t^2j''_{\ell}(t) + t\sigma(N-1) j'_{\ell}(t)
    &t^2y''_{\ell}(t) + t\sigma(N-1)y'_{\ell}(t)
    \end{pmatrix},
    \\
    &(M^{N}_{a})_{1,2}(t) =
    \begin{pmatrix}
    i_{\ell}(t)
    &k_{\ell}(t)
    \\
    t^2i''_{\ell}(t) + t\sigma(N-1)i'_{\ell}(t)
    &t^2k''_{\ell}(t) + t\sigma (N-1) k'_{\ell}(t)
    \end{pmatrix},
    \\
    &(M^{N}_{a})_{2,1}(t) = 
    \begin{pmatrix}
    j_{\ell}(ta)
    &y_{\ell}(ta)
    \\
    t^2j''_{\ell}(ta) + \frac{t}{a}\sigma(N-1) j'_{\ell}(ta)
    &t^2y''_{\ell}(ta) + \frac{t}{a}\sigma(N-1)y'_{\ell}(ta)
    \end{pmatrix},
    \\
    &(M^{N}_{a})_{2,2}(t) = 
    \begin{pmatrix}
    i_{\ell}(ta)
    &k_{\ell}(ta)
    \\
    t^2i''_{\ell}(ta) + \frac{t}{a}\sigma(N-1) i'_{\ell}(ta)
    &t^2k''_{\ell}(ta) + \frac{t}{a}\sigma(N-1)k'_{\ell}(ta)
    \end{pmatrix},
\end{aligned}
\end{equation*}
and in order to have nontrivial solutions we must impose
\begin{equation}\label{eq:detM^N_a=0}
    0=\det M^{N}_{a}(t) = \frac{t^2}{a} \det  \Tilde{M}^{N}_{a}(t) 
\end{equation}
where we have set
\begin{equation*}
    \Tilde{M}^{N}_{a}(t) =
    \begin{pmatrix}
    (\Tilde{M}^{N}_{a})_{1,1}(t)
    &(\Tilde{M}^{N}_{a})_{1,2}(t)
    \\
    (\Tilde{M}^{N}_{a})_{2,1}(t)
    &(\Tilde{M}^{N}_{a})_{2,2}(t)
    \end{pmatrix},
\end{equation*}
for
\begin{equation*}
\begin{aligned}
    &(\Tilde{M}^{N}_{a})_{1,1}(t) = 
    \begin{pmatrix}
    j_{\ell}(t)
    &y_{\ell}(t)
    \\
    (N-1)(1-\sigma) j_{\ell+1}(t)
    &(N-1)(1-\sigma)y_{\ell+1}(t)
    \end{pmatrix},
    \\
    &(\Tilde{M}^{N}_{a})_{1,2}(t) =
    \begin{pmatrix}
    i_{\ell}(t)
    &k_{\ell}(t)
    \\
    -(N-1)(1-\sigma) i_{\ell+1}(t) + 2 t i_{\ell}(t)
    &(N-1)(1-\sigma)k_{\ell+1}(t) + 2t k_{\ell}(t)
    \end{pmatrix},
    \\
    &(\Tilde{M}^{N}_{a})_{2,1}(t) = 
    \begin{pmatrix}
    j_{\ell}(ta)
    &y_{\ell}(ta)
    \\
    (N-1)(1-\sigma) j_{\ell+1}(ta)
    &(N-1)(1-\sigma)y_{\ell+1}(ta)
    \end{pmatrix},
    \\
    &(\Tilde{M}^{N}_{a})_{2,2}(t) = 
    \begin{pmatrix}
    i_{\ell}(ta)
    &k_{\ell}(ta)
    \\
    -(N-1)(1-\sigma) i_{\ell+1}(ta) + 2 at i_{\ell}(ta)
    &(N-1)(1-\sigma)k_{\ell+1}(ta) + 2at k_{\ell}(ta)
    \end{pmatrix}.
\end{aligned}
\end{equation*}
Notice, once more, that for $\sigma = 1$ equation \eqref{eq:detM^N_a=0} gives precisely the (squares of the) eigenvalues of the
Dirichlet Laplacian as solutions. Also, we have not written explicitly the determinants in \eqref{eq:detM^D_a=0} and \eqref{eq:detM^N_a=0} as they are
involved equations that bring no additional information on the eigenvalues.

We finally turn to the case of the punctured ball $B_0$. This case has to be handled in a different way since the boundary of $B_0$ is no longer Lipschitz, and in particular the traces of $H^2$-functions have to be properly interpreted. In particular, when $\Omega= B_0$ then the strong formulations \eqref{eq:strongdirev} and \eqref{eq:strongnavev} do not hold anymore as they are but have to be suitably modified (see also \cite{bfgm} for the discussion of a related bilaplacian problem).

To this end, let us recall that it is possible to define an $H^2$-capacity measure such that, whenever a set $E$ has zero $H^2$-capacity, the following identification holds:
\begin{equation*}
    H^2(\Omega)= H^2(\Omega \setminus E),
\end{equation*}
in the sense that the standard embedding $H^2(\Omega) \hookrightarrow  H^2(\Omega \setminus E)$ is surjective.
We refer to \cite[Section 2.6]{zie}
for the precise definition and basic properties of the capacity applied to Sobolev spaces. In particular, if
$N\geq 4$, a singleton has vanishing $H^2$-capacity (see \cite[Theorem 2.6.16]{zie}), so that
\begin{equation*}
    H^2(B_0) = H^2(B).
\end{equation*}
In addition, since it is not possible to properly define any trace of an $H^2$-function on a singleton for $N\geq 4$, this means that
\begin{equation*}
    H^2_0(B_0) = H^2_0(B).
\end{equation*}
hence the weak formulations \eqref{eq:weakdirev} and \eqref{eq:weaknavev} set in $B_0$ are equivalent to those set in $B$, producing the same solutions (respectively). In particular, the hole in $B_0$ is not seen by the problem, and therefore the
strong formulations coincide with those in $B$.

On the other hand, if $N = 2, 3$ a singleton has now a non-vanishing $H^2$-capacity (see \cite[Remark 2.6.15]{zie}), meaning that it is now possible for the embedding $H^2(\Omega) \hookrightarrow  H^2(\Omega \setminus E)$ not to be surjective. Hence
we must analyze the behaviour of any $u \in H^2(B_0)$ at the origin.

We observe that $\nabla u \in (H^1(B_0))^N = (H^1(B))^N$, since singletons have zero $H^1$-capacity for any $N\geq 2$. In
particular, $\nabla u$ is an $L^2$-vector and is continuous on almost every straight line passing through the origin.
Moreover, it is not possible to trace the gradient on a singleton for any $N \geq 2$ (see also \cite[Section 5.2]{bur}).
On the other hand, by the Sobolev Embedding Theorem (cf. \cite[Section 4.6]{bur}) we know that $u \in C_b(B_0)$,
meaning in particular that $u$ has to be bounded around the origin. Since it is also possible to trace $u$
on a singleton if $N = 2, 3$, we infer that $u$ can be continuously extended at the origin, and therefore
$H^2(B_0) = H^2(B)$.

Let us then consider $u \in H^2_0(B_0)$. Since $u$ can be traced at the origin, we infer that $u(0) = 0$. However,
$\nabla u$ cannot be traced at the origin, and in particular the expression $\frac{\partial u}{\partial \nu}$ does not make sense at the origin, while it still holds that $\nabla u_{|\partial B} = 0$. The same considerations allow to conclude that, if $u \in H^2_{N}(B_0)$, then $u(0) = 0$.

In particular, the Dirichlet problem on $B_0$ for $N = 2, 3$ reads

\begin{equation}\label{eq:strongdirev B0}
\begin{cases}
    \Delta^2 u = \lambda u, & \text{in } B_0,
    \\
    u=\partial_\nu u = 0, & \text{on } \partial B,
    \\
    u(0) = 0,
\end{cases}
\end{equation}
while the Navier problem reads
\begin{equation}\label{eq:strongnavev B0}
\begin{cases}
    \Delta^2 u = \lambda u, & \text{in }  B_0,
    \\
    u=(1+\sigma)\partial_\nu^2 u + \sigma \Delta u = 0, & \text{on } \partial B,
    \\
    u(0) = 0.
\end{cases}
\end{equation}
Now for both the Dirichlet and the Navier problem on $B_0$, if $N \geq 4$ then the eigenvalues and eigenfunctions coincide with those of the ball $B$. If instead $N = 2, 3$, the eigenfunctions can still be written as in \eqref{eq:uradial} with $f$ in the form \eqref{eq:fradial}, however the boundary condition $u(0) = 0$ forces the solution on the origin, providing in turn some conditions on the coefficients $c_i$.
We observe that the functions $y_{\ell}$ and $k_{\ell}$ are not bounded at the origin, and since $u$ is instead continuous
with an $L^2$ gradient, the coefficients $c_2$ and $c_4$ are bound to be such that the combination
\begin{equation*}
    c_2 y_{\ell}(tr) + c_4 k_{\ell}(tr) 
\end{equation*}
reflect this regularity at the origin (note that $j_{\ell}, i_{\ell}$ are of class $C^\infty$ at the origin).

Let us start from $N = 2$, where we are led to consider the combination
\begin{equation*}
    c_2 Y_{\ell}(z) + c_4 K_{\ell}(z).
\end{equation*}
From the series development of $Y_{\ell}, K_{\ell}$ (see Appendix \ref{appA}) it is clear that the only admissible coefficients for $\ell \geq 3$ are $c_2 =c_4 = 0$. For $\ell = 2$ we see that
\begin{equation*}
    c_2 Y_2(z) + c_4 K_2(z) = - c_2 \frac{4}{\pi z^2} \left( 1+\frac{z^2}{4}\right) +  c_4 \frac{2}{z^2} \left( 1-\frac{z^2}{4}\right) + P_1(z)
\end{equation*}
where $P_1$ is smooth. This implies that $c_4 = \frac{2}{\pi} c_2$. However, since
\begin{equation*}
    c_2 Y'_2(z) + \frac{2}{\pi} c_2 K'_2(z) = c_2 \left( J_1(z) + I_1(z) \right) \ln \sqrt{\frac{z}{2}} + P_2(z),
\end{equation*}
where $P_2$ is smooth, recalling that $J_1(0) + I_1(0) =1$, we conclude that the only admissible coefficients
for $\ell = 2$ are $c_2 = c_4 = 0$. 

For $\ell = 1$ we have
\begin{equation*}
    c_2 Y_1(z) + c_4 K_1(z) = - c_2 \frac{2}{\pi z} +  c_4 \frac{2}{z} \left( 1-\frac{z^2}{4}\right) + P_3(z),
\end{equation*}
where $P_3$ is smooth. This implies again that $c_4 = \frac{2}{\pi} c_2$. However, since
\begin{equation*}
    c_2 Y'_2(z) + \frac{2}{\pi} c_2 K'_2(z) = c_2 \frac{J_0(z) + I_0(z)}{2z} \ln \frac{z}{2} + P_4(z),
\end{equation*}
where $P_4$ is smooth, recalling that $J_0(0) + I_0(0) = 2$,
we conclude that the only admissible coefficients for
$\ell = 1$ are $c_2 = c_4 = 0$ as well.

Finally, for $\ell = 0$ we have
\begin{equation*}
    c_2 Y_0(z) + c_4 K_0(z) = c_2 \frac{2}{\pi} \left(\ln \frac{z}{2} + \gamma_{EM} \right)J_0(z) -  c_4 \left(\ln \frac{z}{2} + \gamma_{EM} \right)I_0(z) + P_5(z),
\end{equation*}
where $P_5$ is smooth and $\gamma_{EM}$ is the Euler-Mascheroni constant (see Appendix \ref{appA}). This implies again that $c_4 = \frac{2}{\pi} c_2$, and it is now easy to check that the function
\begin{equation*}
    Y_0(z) + \frac{2}{\pi} K_0(z)
\end{equation*}
is of class $C^2$ at the origin.

As for dimension $N = 3$, the respective expansions (see Appendix \ref{appA}) immediately shows that $c_2 = c_4 = 0$ for $\ell \geq 1$, while $c_4 = \frac{\pi}{2} c_2$ for $\ell = 0$.

Summing up all these arguments, we obtain that the eigenfunctions and eigenvalues of the punctured ball $B_0$ (for $N = 2, 3$) coincide with those of the ball $B$ for all $\ell \geq 1$, while they differ for $\ell = 0$, i.e., the only eigenvalues which are different are those associated with radial eigenfuctions (since the zero-order spherical harmonic $S_0(\theta)$ are constants), which take the form
\begin{equation*}
    u(x) = d_1 \left( j_0(t|x|) - i_0(t|x|) \right) + d_2 \left( y_0(t|x|) + \frac{2}{\pi} k_0(t|x|) \right).
\end{equation*}
In the Dirichlet case \eqref{eq:strongdirev B0}, imposing the boundary conditions on $\partial B$ leads us to a homogeneous linear system with associated matrix
\begin{equation*}
    M^{D}_{0}(t) =
    \begin{pmatrix}
    j_0(t) - i_0(t)
    &y_0(t) + \frac{2}{\pi}k_0(t)
    \\
    tj'_0(t) - ti'_0(t)
    &ty'_0(t) + \frac{2t}{\pi}k'_0(t)
    \end{pmatrix},
\end{equation*}
and in order to have nontrivial solutions we must impose
\begin{equation*}
    0= \det  M^{D}_{0}(t) = t \left(j_1(t) + i_1(t) \right) \left(y_0(t) + \frac{2}{\pi}k_0(t) \right) - t \left(j_0(t) - i_0(t) \right) \left(y_1(t) + \frac{2}{\pi}k_1(t) \right).
\end{equation*}
We observe that this discussion generalizes the results already obtained in \cite{cofdufsha}.

In the Navier case \eqref{eq:strongnavev B0}, imposing the boundary conditions on $\partial B$ leads us to a homogeneous linear
system with associated matrix
\begin{multline*}
    M^{N}_{0}(t) =\\
    \begin{pmatrix}
    j_0(t) - i_0(t)
    &y_0(t) + \frac{2}{\pi}k_0(t)
    \\
    t^2(j''_0(t) - i''_0(t)) + t\sigma(N-1) (j'_0(t) - i'_0(t))
    &t^2\left(y''_0(t) + \frac{2}{\pi}k''_0(t)\right) + t \sigma (N-1)\left(y'_0(t) + \frac{2}{\pi}k'_0(t)\right)
    \end{pmatrix},
\end{multline*}
and in order to have nontrivial solutions we must impose
\begin{equation*}
\begin{aligned}
    0&= \det  M^{N}_{0}(t) 
    \\
    &= t \det 
    \begin{pmatrix}
    j_0(t) - i_0(t)
    &y_0(t) + \frac{2}{\pi}k_0(t)
    \\
    (N-1)(1-\sigma) (j_1(t) + i_1(t)) - 2t i_0(t)
    &(N-1)(1-\sigma) \left(y_1(t) + \frac{2}{\pi}k_1(t)\right) + \frac{4}{\pi} t k_0(t)
    \end{pmatrix}.
\end{aligned}
\end{equation*}
We remark how in this case, posing $\sigma=1$ does not provide the (squares of the) zeros of the Bessel function as solutions at all. This reflect the fact that problem \eqref{eq:weaknavev} is not well defined in the punctured ball as the associated quadratic form is not coercive.

\section{Fundamental tones}\label{sec4}

In this section we analize in more detail the behavior of the eigenfunctions in order to identify the
fundamental tones of balls, punctured balls, and annuli/spherical shells. In particular, we recall from
Section \ref{sec3} that any eigenfunction can be written as
\begin{equation*}
    u(x) = f_\ell(r) S_\ell(\theta),
\end{equation*}
and we are interested in identifying at which phase order $\ell$ the fundamental tone occurs. We follow the
ideas of \cite{cofdufsha} of making explicit the dependence of the Rayleigh quotient upon $\ell$.
Let us start by considering the identity
\begin{equation*}
    \int_\Omega (1-\sigma) |D^2 v|^2 + \sigma (\Delta v)^2 \,dx = \int_{\Omega} v \Delta^2 v \, dx + \int_{\partial^*\Omega} \partial_\nu v ((1-\sigma)\partial^2_\nu v + \sigma \Delta v) \, d \mathcal{H}^{N-1},
\end{equation*}
valid for any $v \in H^2_N(\Omega)$, where $\Omega$ is either $B$ or $B_a$ for some $a \in [0, 1)$, and $\partial^*\Omega = \partial\Omega$ unless $\Omega= B_0$ in which case  $\partial^*\Omega = \partial B$. Let us also recall that the Laplacian in spherical coordinates becomes
\begin{equation*}
    \Delta v = \Delta_r v + \frac{1}{r^2} \Delta_{\mathbb{S}^{N-1}} v,
\end{equation*}
where we have set
\begin{equation*}
    \Delta_r v = r^{1-N} \partial_r \left(r^{N-1} \partial_r \right) v = \partial^2_r v + \frac{N-1}{r} \partial_r v.
\end{equation*}
This implies that 
\begin{equation*}
    \Delta^2 u = \Delta^2 (f_{\ell} S_{\ell}) = \left( \Delta^2_r f_{\ell} - \mu_{\ell} \Delta_r \frac{f_{\ell}}{r^2} - \frac{\mu_{\ell}}{r^2} \Delta_r f_{\ell} + \frac{\mu_{\ell}}{r^4} f_{\ell} \right) S_{\ell},
\end{equation*}
where $\mu_{\ell}= \ell(\ell+N-2)$. Recalling that $\displaystyle\int_{\mathbb{S}^{N-1}} S_{\ell}^2 \, d \mathcal{H}^{N-1} = 1$, we have
\begin{equation}\label{eq:weaknavevrad}
\begin{split}
    \int_\Omega (1-\sigma) |D^2 u|^2 + \sigma (\Delta u)^2 \,dx &= \int_{\Omega} u \Delta^2 u \, dx 
    \\
    &= \int_{a}^{1} \left( \Delta^2_r f_{\ell} - \mu_{\ell} \Delta_r \frac{f_{\ell}}{r^2} - \frac{\mu_{\ell}}{r^2} \Delta_r f_{\ell} + \frac{\mu_{\ell}}{r^4} f_{\ell} \right) f_{\ell} r^{N-1} \,dr,
\end{split}
\end{equation}
where if $\Omega=B$ we must take $a=0$ in the right had side. 

We now proceed to compute the various
terms coming from the last integral in \eqref{eq:weaknavevrad}. We have
\begin{equation*}
\begin{aligned}
    \int_{a}^{1} f_{\ell} \Delta^2_r f_{\ell} r^{N-1} \,dr = \int_{a}^{1} f_{\ell} \partial_r(r^{N-1} \partial_r \Delta_r f_{\ell}) \,dr &= - \int_{a}^{1} \partial_r f_{\ell} \partial_r \Delta_r f_{\ell} r^{N-1}  \,dr
    \\
    &=-\left[\partial_r f_{\ell} \Delta_r f_{\ell} r^{N-1}\right] \bigg|_{r=a}^{r=1} + \int_{a}^{1}  (\Delta_r f_{\ell})^2 r^{N-1} \,dr,
\end{aligned}
\end{equation*}
and similarly
\begin{equation*}
\begin{split}
&\int_{a}^{1} f_{\ell} \Delta^2_r f_{\ell} r^{N-1} \, dr = \int_{a}^{1} f_{\ell} \partial_r \left(r^{N-1} \partial_r \Delta_r f_{\ell} \right) \, dr 
\\
&= - \int_{a}^{1} \partial_r f_{\ell} \left( \partial^3_r f_{\ell} + \frac{N-1}{r} \partial^2_r f_{\ell} -  \frac{N-1}{r^2} \partial_r f_{\ell} \right) r^{N-1} \, dr
\\
&= - \int_{a}^{1} \partial_r f_{\ell} \partial^3_r f_{\ell} r^{N-1} \, dr - (N-1) \int_{a}^{1} \partial_r f_{\ell} \partial^2_r f_{\ell} r^{N-2} \, dr + (N-1) \int_{a}^{1} \left(\partial_r f_{\ell} \right)^2 r^{N-3} \, dr
\\
& = \left[\partial_r f_{\ell} \partial^2_{r} f_{\ell} r^{N-1}\right] \bigg|_{r=a}^{r=1} + \int_{a}^{1} (\partial^2_{r} f_{\ell})^2 r^{N-1} \, dr + (N-1) \int_{a}^{1} \left(\partial_r f_{\ell}\right)^2 r^{N-3} \, dr,
\end{split}
\end{equation*}
so that
\begin{equation*}
\int_{a}^{1} f_{\ell} \Delta^2_r f_{\ell} r^{N-1} \, dr = \int_{a}^{1} \Big( (1-\sigma)(\partial^2_{r} f_{\ell})^2 + \sigma (\Delta^2_{r} f_{\ell})^2 \Big) r^{N-1} \, dr + (1-\sigma)(N-1) \int_{a}^{1} (\partial_r f_{\ell})^2 r^{N-3} \, dr,
\end{equation*}
where we used the fact that
\begin{equation*}
\partial_r f_{\ell} \Big((1-\sigma) \partial^2_{r} f_{\ell} + \sigma \Delta_r f_{\ell} \Big) r^{N-1} \bigg|_{r=a,1} = 0.
\end{equation*}
Now
\begin{equation*}
\begin{split}
    -\int_{a}^{1} f_{\ell} \Delta_r \left(\frac{f_{\ell}}{r^{2}}\right) r^{N-1} \, dr &= - \int_{a}^{1} f_{\ell} \partial_r \left(r^{N-1} \partial_r \left(\frac{f_{\ell}}{r^{2}}\right) \right) \, dr 
    \\ 
    &= \left[2f_{\ell}^2 r^{N-4}-f_\ell\partial_r f_\ell r^{N-3}\right] \bigg|^{r=1}_{r=a}+ \int_{a}^{1} (\partial_r f_{\ell})^2 r^{N-3} \, dr -  2\int_{a}^{1} f_{\ell} \partial_r f_{\ell} r^{N-4} \, dr
    \\
    &= \int_{a}^{1} (\partial_r f_{\ell})^2 r^{N-3} \, dr - \left[f_{\ell}^2 r^{N-4}\right] \bigg|^{r=1}_{r=a}+(N-4)	\int_{a}^{1} f_{\ell}^2 r^{N-5} \, dr
    \\
    &= \int_{a}^{1} (\partial_r f_{\ell})^2 r^{N-3} \, dr + (N-4) \int_{a}^{1} f_{\ell}^2 r^{N-5} \, dr,
\end{split}
\end{equation*}
where we used the fact that 
$$f_{\ell}^2 r^{N-4} \bigg|_{r=a,1} =f_\ell\partial_r f_\ell r^{N-3}\bigg|_{r=a,1}= 0$$ 
for $\ell \geq 1$. Note that for $\ell = 0$ this integral is multiplied by $\mu_0 = 0$ in \eqref{eq:weaknavevrad}.

Similarly
\begin{equation*}
\begin{split}
    -\int_{a}^{1} \frac{f_{\ell}}{r^2} &\Delta_r f_{\ell}  r^{N-1} \, dr = - \int_{a}^{1} f_{\ell} \left(\partial^2_r f_\ell+\frac{N-1}{r}\partial_r f_\ell\right) r^{N-3} \, dr 
    \\
    &= - \left[f_{\ell} \partial_r f_{\ell} r^{N-3}\right] \bigg|_{r=a}^{r=1} + \int_{a}^{1} (\partial_r f_{\ell})^2 r^{N-3} \, dr - 2 \int_{a}^{1} f_{\ell} \partial_r f_{\ell}  r^{N-4} \, dr
    \\
    &= - \left[f_{\ell} \partial_r f_{\ell} r^{N-3} + f_{\ell}^2 r^{N-4} \right] \bigg|_{r=a}^{r=1} + \int_{a}^{1} (\partial_r f_{\ell})^2 r^{N-3} \, dr + (N-4) \int_{a}^{1} f_{\ell}^2 r^{N-5} \, dr
    \\
    &= \int_{a}^{1} (\partial_r f_{\ell})^2 r^{N-3} \, dr + (N-4) \int_{a}^{1} f_{\ell}^2 r^{N-5} \, dr,
\end{split}
\end{equation*}
for $\ell \geq 1$, while for $\ell = 0$ this integral is multiplied by $\mu_0 = 0$ in \eqref{eq:weaknavevrad}.

Summing up we get that
\begin{equation*}
\begin{split}
\int_{\Omega} (1-\sigma) |D^2 u|^2 + \sigma (\Delta u)^2 \, dx &= \,\int_{a}^{1} \left((1-\sigma)(\partial^2_{r} f_{\ell})^2 + \sigma (\Delta_r f_{\ell})^2 \right) r^{N-1} \, dr 
\\
&
\quad+ A(\ell,N) \int_{a}^{1} (\partial_r f_{\ell})^2 r^{N-3} \, dr + B(\ell,N) \int_{a}^{1} f_{\ell}^2 r^{N-5} \, dr
\\
&
=: \mathcal{F}(\ell,N,f_{\ell}),
\end{split}
\end{equation*}
where $A(\ell,N) = (2\mu_{\ell} + (1-\sigma)(N-1))$ and $B(\ell,N) = \mu_{\ell} (\mu_{\ell} + 2N-8)$.

For the same reason
\begin{equation*}
\int_{\Omega} |u|^2 \, dx = \int_{a}^{1} f_{\ell}^2 r^{N-1} \, dr,
\end{equation*}
and this tells us that
\begin{equation}
\label{evkph}
\lambda^{\bullet,\sigma}_{1,\ell} (\Omega) = \min_{f \in V^{\bullet} (\Omega)} \dfrac{\mathcal{F}(\ell,N,f)}{\displaystyle\int_{a}^{1} f^2 r^{N-1} \, dr}
\end{equation}
is an eigenvalue, more specifically is the smaller eigenvalue with phase order $\ell$. Here $V^\bullet(\Omega)$ is defined as follows:
\begin{itemize}
    \item $V_D(B_a) = \{f \in C^2(a,1) : f(a) = f(1) = f'(a) = f'(1) = 0\}$ for $a \in (0,1)$;
    \item $V_D(B_0) = \{f \in C^2(0,1) : f(0) = f(1) = f'(1) = 0\}$;
    \item $V_D(B) = \{f \in C^2[0,1) : f(1) = f'(1) = 0\}$;
    \item $V_N(B_a) = \{f \in C^2(a,1) : f(a) = f(1) = 0\}$ for $a \in (0,1)$;
    \item $V_N(B_0) = \{f \in C^2(0,1) : f(0) = f(1) = 0\}$;
    \item $V_N(B) = \{f \in C^2[0,1) : f(1) = 0\}$.
\end{itemize}

Hence, the fundamental tone of the problem will be
\begin{equation*}
\lambda^{\bullet,\sigma}_{1}(\Omega) = \min_{\ell \in \mathbb{N}_0} \lambda^{\bullet,\sigma}_{1,\ell}(\Omega).
\end{equation*}

We now analyze the behavior of the Rayleigh quotient with respect to $\ell$. We observe that the dependence upon $\ell$ is expressed through two coefficients depending on $\ell$. The first one is
\begin{equation*}
A(\ell,N) = 2\ell(\ell+ N-2) + (1-\sigma)(N-1),
\end{equation*}
which is strictly increasing in $\ell \in \mathbb{N}_0$ for any value of the dimension $N$. The second one
\begin{equation*}
B(\ell,N) = \ell(\ell+ N-2)(\ell^2 + \ell(N-2) + 2(N-4))
\end{equation*}
is strictly increasing in $\ell \in \mathbb{N}_0$ only for $N \geq 4$. For $N = 2$ it becomes
\begin{equation*}
B(\ell,2) = \ell^2(\ell^2 -4)
\end{equation*}
which is strictly increasing only for $\ell \geq 1$ with $B(0,2) = 0 > -3 = B(1,2)$, while for $N = 3$
\begin{equation*}
B(\ell,3) = \ell(\ell+ 1)(\ell^2 + \ell -2)
\end{equation*}
is again strictly increasing only for $\ell \geq 1$, and $B(0,3) = B(1,3) = 0$. In particular, all these considerations prove the following

\begin{theorem}\label{ftthm}
For $N \geq 3$, the fundamental tone of the ball $B$, of a spherical shell or of the punctured ball $B_a$ for $a \in [0,1)$, is always simple and associated with a radial eigenfunction, both for the Dirichlet problem \eqref{eq:weakdirev} and for the Navier problem \eqref{eq:weaknavev} (for any $\sigma$ such that the problem is well defined).

For $N = 2$, the fundamental tone of the ball $B$, of an annulus or of the punctured ball $B_a$ for $a \in [0,1)$, can be associated with either a radial eigenfunction or one with angular phase of order 1, both for the Dirichlet problem \eqref{eq:weakdirev} and for the Navier problem \eqref{eq:weaknavev} (for any $\sigma$ such that the problem is well defined). In particular, the fundamental tone can be either simple (in the first case), double (in the second case), or triple (in the bifurcation case).
\end{theorem}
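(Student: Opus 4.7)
The plan is to leverage the phase-order decomposition \eqref{evkph} derived above: the test space $V^\bullet(\Omega)$ does not depend on $\ell$, so the whole $\ell$-dependence of $\mathcal{F}(\ell,N,f)$ is concentrated in the two coefficients $A(\ell,N)$ and $B(\ell,N)$. The basic observation is that, for any fixed admissible $f$, $\mathcal{F}(\ell',N,f)\le\mathcal{F}(\ell,N,f)$ whenever $A(\ell',N)\le A(\ell,N)$ and $B(\ell',N)\le B(\ell,N)$, with strict inequality as soon as the coefficient inequalities are strict and the corresponding integrals do not vanish. Passing to the infimum over $f$ then yields the analogous comparison for $\lambda^{\bullet,\sigma}_{1,\ell}(\Omega)$.

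For $N\ge 3$, I would combine the two monotonicity facts established just before the statement: $A(0,N)<A(\ell,N)$ strictly for every $\ell\ge 1$, and $B(0,N)=0\le B(\ell,N)$ (with equality possibly occurring only when $N=3$ and $\ell=1$). Taking a first eigenfunction $f^*_\ell$ of the $\ell$-phase-order problem, which lies in $V^\bullet(\Omega)$ and hence is admissible also for the $\ell=0$ problem, the strict $A$-inequality combined with $\int (\partial_r f^*_\ell)^2 r^{N-3}\,dr>0$ (since $f^*_\ell$ is non-constant) gives $\lambda^{\bullet,\sigma}_{1,0}(\Omega)\le \mathcal{F}(0,N,f^*_\ell)/\int (f^*_\ell)^2 r^{N-1}\,dr < \mathcal{F}(\ell,N,f^*_\ell)/\int (f^*_\ell)^2 r^{N-1}\,dr = \lambda^{\bullet,\sigma}_{1,\ell}(\Omega)$. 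Hence the fundamental tone is attained only at $\ell=0$, which forces the eigenfunction to be radial since $S_0$ is constant. Simplicity then follows because the $\ell=0$ problem is a one-dimensional fourth-order eigenvalue problem on an interval whose first eigenvalue is simple by standard ODE spectral theory.

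For $N=2$ the same argument still rules out every $\ell\ge 2$, since $A(\ell,2)>A(0,2)$ strictly and $B(\ell,2)\ge 0=B(0,2)$ (with equality at $\ell=2$), hence $\lambda^{\bullet,\sigma}_{1,0}<\lambda^{\bullet,\sigma}_{1,\ell}$ for all $\ell\ge 2$. Between $\ell=0$ and $\ell=1$, however, the comparison is inconclusive: $A(1,2)>A(0,2)$ while $B(1,2)=-3<0=B(0,2)$, so the $\ell=1$ branch may beat the $\ell=0$ one. Consequently the fundamental tone can be attained only at $\ell=0$, only at $\ell=1$, or simultaneously at both. The multiplicity classification in the statement then follows from the dimensions of the associated spherical harmonic spaces: radial harmonics are one-dimensional, while the space of first-order spherical harmonics in $\mathbb{R}^2$ is two-dimensional, spanned by $\cos\theta$ and $\sin\theta$.

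The main technical point I would expect to require care concerns the case $a=0$ (ball and punctured ball), where the $B$-integral carries the singular weight $r^{N-5}$ and could in principle diverge at the origin. The argument still goes through because the minimization in \eqref{evkph} for $\ell\ge 1$ is implicitly restricted to those $f\in V^\bullet(\Omega)$ for which the Rayleigh quotient is finite, so in particular $\int f^2 r^{N-5}\,dr<\infty$; the $\ell=0$ Rayleigh quotient of any such $f$ no longer sees the $B$-term and is automatically finite, making the comparison of infima legitimate. For spherical shells $B_a$ with $a>0$ this issue does not arise at all since the integration domain is bounded away from the origin.
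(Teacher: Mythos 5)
Your proposal is correct and follows essentially the same route as the paper: the paper's proof consists precisely of the decomposition \eqref{evkph} together with the monotonicity properties of $A(\ell,N)$ and $B(\ell,N)$ discussed immediately before the statement, which is exactly what you exploit. Your write-up is in fact somewhat more careful than the paper's (using the $\ell$-phase minimizer as a test function for $\ell=0$, and addressing the integrability of the $r^{N-5}$-weighted term near the origin), but the underlying argument is identical.
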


We recall that the result of Theorem \ref{ftthm} is a generalization of \cite{cofdufsha} which analyzes only the Dirichlet problem for $N = 2$. Here we highlight the fact that if the dimension is $N \geq 3$ then the situation is somehow simpler and the fundamental tone of any radial domain is always simple. A fact that seems still out of reach is to prove that in any of those cases the associated eigenfunction is always of one sign.

We recall that in \cite{cofdufsha} it was shown that the fundamental tone of the Dirichlet problem \eqref{eq:weakdirev} on a radial planar domain can be only associated with $\ell = 0$ or $\ell = 1$, and provided numerical computations showing that there exists a critical value $a_0 \approx 0.0013117174$ such that the fundamental tone is radial for $a > a_0$, while it has angular phase $\ell = 1$ for $a < a_0$, meaning in addition that the first eigenvalue is double for $a < a_0$. Moreover, for $a = a_0$ the first eigenvalue is triple. While an analytic proof of this fact seems out of reach at the moment, nevertheless we have the following

\begin{theorem}
\label{a1}
Let $N = 2$, and let
\begin{equation*}
a_1 = e^{-\pi \sqrt{2}} \approx 0.01176198.
\end{equation*}
If $a > a_1$, then the fundamental tone of $B_a$ is simple and the associated eigenfunction is radial. If $a = a_1$ the fundamental tone admits a radial eigenfunction.
\end{theorem}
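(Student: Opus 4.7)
The plan is to reduce everything to a sharp weighted Poincar\'e inequality on $(0,L)$ with $L = -\ln a$. By Theorem \ref{ftthm}, the fundamental tone in dimension two is associated with either $\ell = 0$ or $\ell = 1$, so it suffices to establish the comparison $\lambda^{D}_{1,0}(B_a) \leq \lambda^{D}_{1,1}(B_a)$, with strict inequality for $a > a_1$. I write the plan for the Dirichlet case, where the identity $\int (1-\sigma)|D^2u|^2 + \sigma(\Delta u)^2 = \int (\Delta u)^2$ on $H^2_0$ allows one to work with the clean quadratic form; the Navier case is analogous.

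The first step is the logarithmic change of variable $r = e^{-s}$, $s \in (0,L)$, setting $\tilde{f}(s) = f(e^{-s})$. Using $f''(r) = (\tilde{f}'' + \tilde{f}')/r^2$ and integrating by parts, the Dirichlet boundary conditions translate into $\tilde{f}(0) = \tilde{f}(L) = \tilde{f}'(0) = \tilde{f}'(L) = 0$, so all boundary contributions vanish and the cross terms coming from $(\tilde{f}'' + \tilde{f}')^2$ collapse. The two Rayleigh quotients in \eqref{evkph} then take the common form
\begin{equation*}
\lambda^{D}_{1,0}(B_a) = \inf_{\tilde{f}}\,\frac{\displaystyle\int_0^L (\tilde{f}'')^2 e^{2s}\, ds}{\displaystyle\int_0^L \tilde{f}^2 e^{-2s}\, ds}, \qquad \lambda^{D}_{1,1}(B_a) = \inf_{\tilde{f}}\,\frac{\displaystyle\int_0^L \bigl[(\tilde{f}'')^2 + 2(\tilde{f}')^2 - 3\tilde{f}^2\bigr] e^{2s}\, ds}{\displaystyle\int_0^L \tilde{f}^2 e^{-2s}\, ds}.
\end{equation*}
Since the admissible classes coincide, the comparison $\lambda^{D}_{1,1}(B_a) \geq \lambda^{D}_{1,0}(B_a)$ reduces to the weighted Poincar\'e inequality
\begin{equation*}
2 \int_0^L (\tilde{f}')^2 e^{2s}\, ds \geq 3 \int_0^L \tilde{f}^2 e^{2s}\, ds, \qquad \tilde{f}(0) = \tilde{f}(L) = 0.
\end{equation*}

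The sharp constant $\mu(L)$ in $\int_0^L (\tilde{f}')^2 e^{2s}\, ds \geq \mu(L) \int_0^L \tilde{f}^2 e^{2s}\, ds$ is the smallest eigenvalue of the Sturm--Liouville problem $\tilde{f}'' + 2\tilde{f}' + \mu\tilde{f} = 0$ with Dirichlet endpoints. Its characteristic roots are $-1 \pm \sqrt{1-\mu}$, so the general oscillatory solution reads $\tilde{f}(s) = e^{-s}\bigl(A\cos(\sqrt{\mu-1}\,s) + B\sin(\sqrt{\mu-1}\,s)\bigr)$, and imposing $\tilde{f}(0) = \tilde{f}(L) = 0$ forces
\begin{equation*}
\mu(L) = 1 + \frac{\pi^2}{L^2}.
\end{equation*}
Thus $2\mu(L) \geq 3$ is equivalent to $L \leq \pi\sqrt{2}$, i.e.\ to $a \geq e^{-\pi\sqrt{2}} = a_1$, which is exactly the threshold in the statement.

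Putting the pieces together, for $a \geq a_1$ one obtains $\lambda^{D}_{1,1}(B_a) \geq \lambda^{D}_{1,0}(B_a)$, with strict inequality when $a > a_1$ because $2\mu(L) > 3$ in that regime. Combined with the strict monotonicity of $A(\ell,2)$ in $\ell$ and with $B(\ell,2) \geq 0$ for $\ell \geq 2$ (which give $\lambda^{D}_{1,\ell}(B_a) > \lambda^{D}_{1,0}(B_a)$ for every $\ell \geq 2$) and with the simplicity of the first eigenvalue of the one-dimensional radial fourth-order problem, this yields that for $a > a_1$ the fundamental tone is simple and associated with a radial eigenfunction, whereas at $a = a_1$ the non-strict comparison still guarantees a radial eigenfunction at the fundamental tone. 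The main point requiring care is the integration-by-parts bookkeeping in the logarithmic change of variable that produces the clean forms of $\mathcal{F}(0,2,f)$ and $\mathcal{F}(1,2,f)$ above; the remaining weighted Sturm--Liouville computation is elementary.
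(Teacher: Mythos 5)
Your proposal is correct and follows essentially the same route as the paper: both reduce the comparison of the $\ell=0$ and $\ell=1$ Rayleigh quotients to the difference $2\int_a^1 (\partial_r f)^2 r^{-1}\,dr - 3\int_a^1 f^2 r^{-3}\,dr$ and then identify the sharp constant $1+\pi^2/(\ln a)^2$ of the associated one-dimensional eigenvalue problem, which exceeds $\tfrac32$ exactly when $a>e^{-\pi\sqrt2}$. The only cosmetic difference is that you perform the logarithmic substitution at the level of the quadratic form (obtaining the constant-coefficient Sturm--Liouville problem $\tilde f''+2\tilde f'+\mu\tilde f=0$), whereas the paper solves the equivalent Euler equation $r^2 f''-rf'+\lambda f=0$ directly.
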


Note that the value $a_1$ obtained in Theorem \ref{a1} is way worse than the value $a_0$ obtained in \cite{cofdufsha}. Also when considering the Navier problem with any value of $\sigma$ the value $a_1$ is far from optimal: as can be seen from Figure \ref{figura1} the threshold ratio seems to be always below $0.006$. An interesting question would be to understand the dependence of the threshold radius upon $\sigma$.

\begin{figure}[htb]
\centering
\begin{tabular}{cc}

\includegraphics[width=0.40\textwidth]{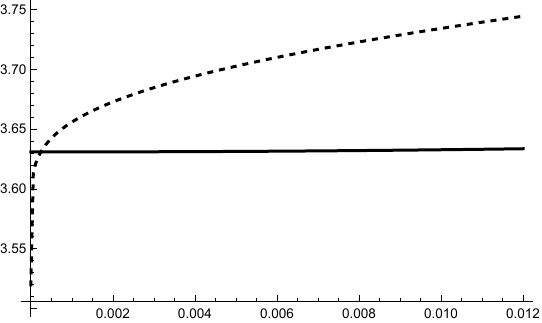} \quad & \quad\includegraphics[width=0.40\textwidth]{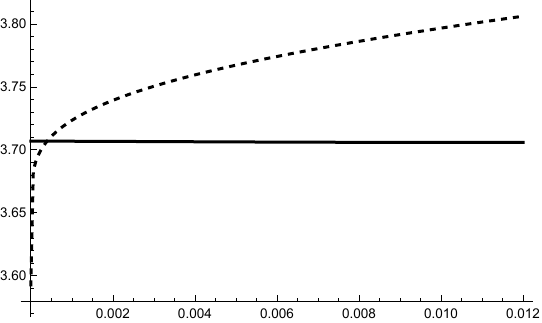} \\
$\sigma=-0.7$ & $\sigma=-0.4$ \\

\includegraphics[width=0.40\textwidth]{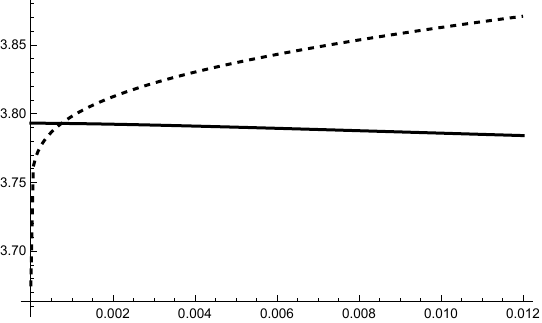} \quad&\quad \includegraphics[width=0.40\textwidth]{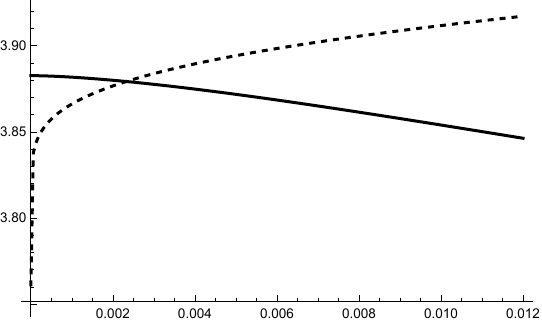} \\

$\sigma=0$ & $\sigma=0.5$ \\

\includegraphics[width=0.40\textwidth]{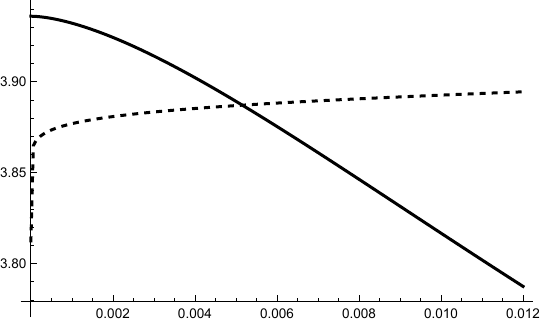} \quad& \quad\includegraphics[width=0.40\textwidth]{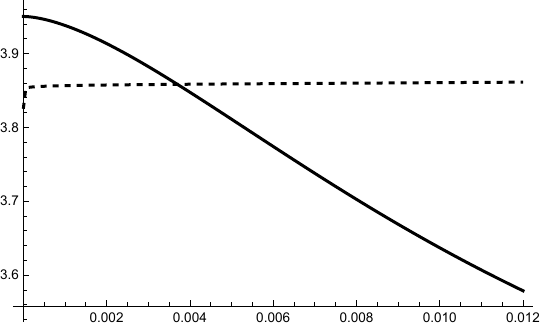} \\

$\sigma=0.85$ & $\sigma=0.95$ \\

\end{tabular}
\caption{The smallest eigenvalue associated with $\ell=0$ (bold) and $\ell=1$ (dashed) for $a\in[0,0.012]$ and for different values of $\sigma$.
}

\label{figura1}
\end{figure}

We observe in passing that Figure \ref{figura1} shows that, differently from the Dirichlet case, in the Navier case we have examples of domains for which $\Omega_1\subseteq \Omega_2$ and $\lambda_1^{N,\sigma}(\Omega_1)\le\lambda_1^{N,\sigma}(\Omega_1)$.

\begin{proof}
We now proceed with estimating for what values of $a$ the quotient in \eqref{evkph} for $\ell = 1$ is bigger than the quotient for $\ell = 0$ for any given $f \in V^\bullet(B_a)$. We first observe that
\begin{equation*}
\begin{split}
\left( \int_{a}^{1} \Big((1-\sigma)(\partial^2_{r} f)^2 + \sigma (\Delta_r f)^2 \Big)\, rdr + A(1,2) \int_{a}^{1} (\partial_r f)^2 r^{-1} \, dr + B(1,2) \int_{a}^{1} f^2 r^{-3} \, dr \right)
\\
- \left( \int_{a}^{1} \Big( (1-\sigma)(\partial^2_{r} f)^2 + \sigma (\Delta_r f)^2 \Big) \, rdr + A(0,2) \int_{a}^{1} (\partial_r f)^2 r^{-1} \, dr + B(0,2) \int_{a}^{1} f^2 r^{-3} \, dr \right)
\\
= 2 \int_{a}^{1} (\partial_r f)^2 r^{-1} \, dr - 3 \int_{a}^{1} f^2 r^{-3} \, dr.
\end{split}
\end{equation*}

The conclusion will follow once we show that the smallest eigenvalue $\lambda_1(a)$ of the problem
\begin{equation}\label{eulereq}
\int_{a}^{1} (\partial_r f)^2 r^{-1} \, dr = \lambda \int_{a}^{1} f^2 r^{-3} \, dr
\end{equation}
is greater than $\frac{3}{2}$ for $a > a_1$ and that $\lambda_1(a_1) = \frac{3}{2}$. From now on we will consider $a > 0$.

We observe that problem \eqref{eulereq} can be associated with a selfadjoint operator with compact resolvent and therefore its spectrum consists of eigenvalues of finite multiplicity, the smallest of which satisfies
\begin{equation*}
\lambda_1(a) = \inf_{f \in V^\bullet(B_a)} \frac{\displaystyle \int_{a}^{1} (\partial_r f)^2 r^{-1} \, dr}{\displaystyle \int_{a}^{1} f^2 r^{-3} \, dr} = \min_{f \in H^1_0(a,1)} \dfrac{\displaystyle \int_{a}^{1} (\partial_r f)^2 r^{-1} \, dr}{\displaystyle \int_{a}^{1} f^2 r^{-3} \, dr}.
\end{equation*}

In particular, equation \eqref{eulereq} can be rewritten in strong form as
\begin{equation}\label{eulereqstrong}
\begin{cases}
    r^2 \partial^2_{r} f - r \partial_r f + \lambda f = 0, \quad r \in (a,1),
    \\
    f(a) = f(1) = 0.
\end{cases}
\end{equation}

Problem \eqref{eulereqstrong} is known as the linear Euler equation and there are several ways to solve it (see e.g., \cite{ince}). One possibility is to operate a substitution of the type $x = e^t$, which leads to the following characteristic equation
\begin{equation}\label{chareu}
m^2 - 2m + \lambda = 0.
\end{equation}
We distinguish three possible cases depending on $\lambda$.

\textbf{Case $0 < \lambda < 1$.} In this case equation \eqref{chareu} admits two real roots
\begin{equation*}
m_\pm = 1 \pm \sqrt{1 - \lambda},
\end{equation*}
hence problem \eqref{eulereqstrong} has solutions of the type
\begin{equation*}
f(r) = \alpha r^{m_-} + \beta r^{m_+},
\end{equation*}
where $\alpha, \beta \in \mathbb{R}$ have to be deduced imposing the boundary conditions. A quick check shows that there are no solutions to problem \eqref{eulereqstrong} in this form, so we conclude that $\lambda \notin (0,1)$.

\textbf{Case $\lambda = 1$.} In this case equation \eqref{chareu} admits one real double root
\begin{equation*}
m = 1,
\end{equation*}
hence problem \eqref{eulereqstrong} has solutions of the type
\begin{equation*}
f(r) = \alpha r^m \ln r + \beta r^m,
\end{equation*}
where $\alpha, \beta \in \mathbb{R}$ have to be deduced imposing the boundary conditions. A quick check shows that there are no solutions to problem \eqref{eulereqstrong} in this form, so we conclude that $\lambda \notin (0,1]$.

\textbf{Case $\lambda > 1$.} In this case equation \eqref{chareu} admits the complex roots
\begin{equation*}
m_\pm = 1 \pm i \sqrt{\lambda - 1},
\end{equation*}
hence problem \eqref{eulereqstrong} has solutions of the type
\begin{equation*}
f(r) = \alpha r \cos\left(\sqrt{\lambda - 1} \ln r \right) + \beta r \sin \left(\sqrt{\lambda - 1} \ln r\right),
\end{equation*}
where $\alpha, \beta \in \mathbb{R}$ have to be deduced imposing the boundary conditions. The condition $f(1) = 0$ readily implies that $\alpha = 0$, while the condition $f(a) = 0$ leads to
\begin{equation*}
a \sin \left(\sqrt{\lambda - 1} \ln a \right) = 0,
\end{equation*}
which in turn provides the following values for the eigenvalues of problem \eqref{eulereqstrong}
\begin{equation*}
\lambda = 1 + \left(\frac{h \pi}{\ln a}\right)^2, \quad \text{for } h \in \mathbb{N} \setminus \{0\}.
\end{equation*}
In particular, the first eigenvalue is
\begin{equation*}
\lambda_1(a) = 1 + \left(\frac{\pi}{\ln a}\right)^2,
\end{equation*}
and
\begin{equation*}
\lambda_1(a) > \frac{3}{2} \quad \text{if and only if} \quad a > e^{-\pi \sqrt{2}}.
\end{equation*}
\end{proof}

\appendix
\section{Useful properties of Bessel functions}\label{appA}
In this appendix we recall some properties of Bessel functions that we used in previous sections; we refer to \cite{olvernist} for general references.

We recall that the Bessel functions of the first kind $J_p$ and of the second kind $Y_p$ are the solutions to Bessel’s equation
\begin{equation*}
z^2 \frac{\partial^2 w}{\partial z^2}(z) + z \frac{\partial w}{\partial z}(z) + (z^2 - p^2) w(z) = 0.
\end{equation*}

Bessel functions admit the following series representations for real arguments (here $\Gamma$ denotes Euler’s Gamma function):
\begin{equation*}
\begin{split}
J_p(x) &= \frac{x^p}{2^p} \sum_{k=0}^{\infty} \frac{1}{k! \Gamma(p + k + 1)} \left(-\frac{x^2}{4}\right)^k,
\\
Y_p(x) &= \frac{J_p(x) \cos(p \pi) - J_{-p}(x)}{\sin(p \pi)}, \quad \text{if } p \notin \mathbb{Z},
\\
Y_p(x) &= \frac{1}{\pi} \frac{\partial J_q(x)}{\partial q} \bigg|_{q=p} + \frac{(-1)^p}{\pi} \frac{\partial J_q(x)}{\partial q} \bigg|_{q=-p}, \quad \text{if } p \in \mathbb{Z}.
\end{split}
\end{equation*}
In particular,
\begin{equation*}
Y_p(x) = -\frac{2^p}{\pi x^p} \sum_{k=0}^{p-1} \frac{(p-k-1)! x^{2k}}{k! 4^k} + \frac{2}{\pi} \ln\left(\frac{x}{2}\right) J_p(x) + Q_p(x), \quad \text{if } p \in \N \setminus \{0\}
\end{equation*}
and
\begin{equation*}
Y_0(x) = \frac{2}{\pi} \left( \gamma_{\text{EM}} + \ln \left(\frac{x}{2}\right) \right) J_0(x) + Q_0(x),
\end{equation*}
where $\gamma_{\text{EM}}$ is the Euler-Mascheroni constant and $Q_p$ are $C^\infty$-functions (see \cite[Sec. 10.8]{olvernist} for more details).

The ultraspherical Bessel functions $j_p, y_p$ are defined as the solutions to the ultraspherical Bessel equation
\begin{equation*}
z^2 \frac{\partial^2 w}{\partial z^2}(z) + z(N-1) \frac{\partial w}{\partial z}(z) + (z^2 - p(p+N-2)) w(z) = 0,
\end{equation*}
and it is easy to see that
\begin{equation*}
j_p(x) = x^{1 - \frac{N}{2}} J_{p + \frac{N}{2} - 1}(x), \quad y_p(x) = x^{1 - \frac{N}{2}} Y_{p + \frac{N}{2} - 1}(x).
\end{equation*}
In general the dependence upon the dimension $N$ is not explicitly specified, and canonical Bessel functions (usually referred to as cylindrical) are the ultraspherical functions for $N = 2$. Notice that the expansion of $y_p$ for $N$ even can be easily deduced from that of $Y_p$, while for $N \geq 3$ odd and $p \in \mathbb{N}$ we have
\begin{equation*}
y_p(x) = (-1)^{p + \frac{N-1}{2}} 2^{p + \frac{N}{2} - 1} \sum_{k=0}^{\left\lfloor \frac{p+N-3}{2} \right\rfloor} \frac{x^{2k - p - N + 2}}{k! (-4)^k \Gamma(k - p - \frac{N}{2} + 2)} + P_p(x),
\end{equation*}
where $\left\lfloor x \right\rfloor$ is the integer part of $x$ and $P_p$ is a $C^\infty$-function.

It is also possible to express the derivatives (with respect to $x$) of Bessel functions in terms of other Bessel functions. From \cite[Eq. 10.6.2]{olvernist} it is easy to deduce that
\begin{equation*}
\begin{split}
    j'_p(x) &= -j_{p+1}(x) + \frac{p}{x} j_p(x) = j_{p-1}(x) - \frac{p + N - 2}{x} j_p(x),
    \\
    j''_p(x) &= \frac{N-1}{x} j_{p+1}(x) + \frac{p^2 - p - x^2}{x^2} j_p(x),
\end{split}
\end{equation*}
and
\begin{equation*}
\begin{split}
y'_p(x) &= -y_{p+1}(x) + \frac{p}{x} y_p(x) = y_{p-1}(x) - \frac{p + N - 2}{x} y_p(x),
\\
y''_p(x) &= \frac{N-1}{x} y_{p+1}(x) + \frac{p^2 - p - x^2}{x^2} y_p(x).
\end{split}
\end{equation*}
Analogously, the modified Bessel functions of the first kind $I_p$ and of the second kind $K_p$ are the solutions to modified Bessel’s equation
\begin{equation*}
z^2 \frac{\partial^2 w}{\partial z^2}(z) + z \frac{\partial w}{\partial z}(z) - (z^2 + p^2) w(z) = 0.
\end{equation*}
We observe that it is possible to obtain the modified Bessel’s equation from the canonical Bessel’s equation replacing $z$ by $\pm iz$. Modified Bessel functions admit the following series representations for real arguments:
\begin{equation*}
\begin{split}
I_p(x) &= \frac{x^p}{2^p} \sum_{k=0}^{\infty} \frac{1}{k! \Gamma(p + k + 1)} \left(\frac{x^2}{4}\right)^k,
\\
K_p(x) &= \frac{\pi}{2} \frac{I_{-p}(x) - I_p(x)}{\sin(p \pi)}, \quad \text{if } p \notin \mathbb{Z},
\\
K_p(x) &= \frac{(-1)^{p-1}}{2} \frac{\partial I_q(x)}{\partial q} \bigg|_{q=p} + \frac{(-1)^{p-1}}{2} \frac{\partial I_q(x)}{\partial q} \bigg|_{q=-p}, \quad \text{if } p \in \mathbb{Z}.
\end{split}
\end{equation*}
In particular,
\begin{equation*}
K_p(x) = \frac{2^{p-1}}{x^p} \sum_{k=0}^{p-1} \frac{(p-k-1)! x^{2k}}{k! (-4)^k} + (-1)^{p+1} \ln \left( \frac{x}{2} \right) I_p(x) + \tilde{Q}_p(x), \quad \text{if } p \in \mathbb{N} \setminus \{0\},
\end{equation*}
and 
\begin{equation*}
K_0(x) = -\left( \gamma_{\text{EM}} + \ln \left( \frac{x}{2} \right) \right) I_0(x) + \tilde{Q}_0(x),
\end{equation*}
where $\gamma_{\text{EM}}$ is the Euler-Mascheroni constant and $\tilde{Q}_p$ are $C^\infty$-functions (see \cite[Sec. 10.31]{olvernist} for more details).

The modified ultraspherical Bessel functions $i_p, k_p$ are defined as the solutions to the modified ultraspherical Bessel equation
\begin{equation*}
z^2 \frac{\partial^2 w}{\partial z^2}(z) + z(N-1) \frac{\partial w}{\partial z}(z) - (z^2 + p(p + N-2)) w(z) = 0,
\end{equation*}
and it is easy to see that
\begin{equation*}
i_p(x) = x^{1 - \frac{N}{2}} I_{p + \frac{N}{2} - 1}(x), \quad k_p(x) = x^{1 - \frac{N}{2}} K_{p + \frac{N}{2} - 1}(x).
\end{equation*}
In general the dependence upon the dimension $N$ is not explicitly specified, and canonical modified Bessel functions are the ultraspherical functions for $N = 2$. Notice that the expansion of $k_p$ for $N$ even can be easily deduced from that of $K_p$, while for $N \geq 3$ odd and $p \in \mathbb{N}$ we have
\begin{equation*}
k_p(x) = (-1)^{p + \frac{N + 1}{2}} \pi 2^{p + \frac{N}{2} - 2} \sum_{k=0}^{\left\lfloor \frac{p+N-3}{2} \right\rfloor} \frac{x^{2k - p - N + 2}}{k! (4)^k \Gamma(k - p - \frac{N}{2} + 2)} + \tilde{P}_p(x),
\end{equation*}
where again $\left\lfloor x \right\rfloor$ is the integer part of $x$ and $\tilde{P}_p$ is a $C^\infty$-function.

It is also possible to express the derivatives (with respect to $x$) of modified Bessel functions in terms of other modified Bessel functions. From \cite[Eq. 10.29.2]{olvernist} it is easy to deduce that
\begin{equation*}
\begin{split}
i'_p(x) &= i_{p+1}(x) + \frac{p}{x} i_p(x) = i_{p-1}(x) - \frac{p + N - 2}{x} i_p(x),
\\
i''_p(x) &= -\frac{N-1}{x} i_{p+1}(x) + \frac{p^2 - p + x^2}{x^2} i_p(x),    
\end{split}
\end{equation*}
and 
\begin{equation*}
\begin{split}
k'_p(x) &= -k_{p+1}(x) + \frac{p}{x} k_p(x) = -k_{p-1}(x) - \frac{p + N - 2}{x} k_p(x),
\\
k''_p(x) &= \frac{N-1}{x} k_{p+1}(x) + \frac{p^2 - p + x^2}{x^2} k_p(x).
\end{split}
\end{equation*}

\section*{Acknowledgments}
The authors are members of the Gruppo Nazionale per l'Analisi Matematica, la Probabilità e le loro
Applicazioni (GNAMPA) of the Istituto Nazionale di Alta Matematica (INdAM). We acknowledge support from the project "Perturbation problems and asymptotics for elliptic differential equations: variational and potential theoretic methods" funded by the European Union - Next Generation EU and by MUR Progetti di Ricerca di Rilevante Interesse Nazionale (PRIN) Bando 2022 grant 2022SENJZ3.


\end{document}